\documentclass[aop,preprint]{imsart}
\setattribute{journal}{name}{}

\RequirePackage[OT1]{fontenc}
\RequirePackage{amsthm,amsmath,amsfonts,amssymb,mathrsfs}
\RequirePackage[numbers]{natbib}
\RequirePackage[colorlinks,citecolor=blue,urlcolor=blue]{hyperref}
\usepackage{tikz}
\usetikzlibrary{arrows,calc}
\usepackage{bbm}
\usepackage{enumerate}

\usepackage{mathtools}
\mathtoolsset{showonlyrefs}



\usepackage{environ}
\NewEnviron{eq}{%
\begin{equation}\begin{split}
  \BODY
\end{split}\end{equation}
}

\startlocaldefs

\def\sss{\scriptscriptstyle}

\newcommand{\ind}[1]{\ensuremath{\mathbbm{1}{\left\{#1\right\}}}}

\newcommand{\E}{\ensuremath{\mathbbm{E}}}
\newcommand{\R}{\ensuremath{\mathbbm{R}}}
\newcommand{\N}{\ensuremath{\mathbbm{N}}}

\newcommand{\dif}{\mathrm{d}}

\newcommand{\cN}{\mathcal{N}}

\newcommand{\cW}{\mathcal{W}}

\newcommand{\eqd}{\stackrel{\sss d}{=}}

\newcommand{\sB}{\mathscr{B}}

\newcommand\blfootnote[1]{%
  \begingroup
  \renewcommand\thefootnote{}\footnote{#1}%
  \addtocounter{footnote}{-1}%
  \endgroup
}

\newtheorem{theorem}{Theorem}
\newtheorem*{claim*}{Claim}

\newtheorem{lemma}[theorem]{Lemma}
\newtheorem{proposition}[theorem]{Proposition}
\newtheorem{corollary}[theorem]{Corollary}

\newtheorem{remark}{Remark}

\numberwithin{fact}{section}
\newtheorem{defn}{Definition}




\let\plainqed\qedsymbol

\numberwithin{equation}{section}
\numberwithin{theorem}{section}

\endlocaldefs

\begin{document}

\begin{frontmatter}
\title{A correction to Kallenberg's theorem for jointly exchangeable random measures}
\runtitle{Local finiteness of jointly exchangeable random measures}

\begin{aug}
\author{\fnms{Christian} \snm{Borgs}\thanksref{m1}\ead[label=e1]{Christian.Borgs@microsoft.com}},
\author{\fnms{Jennifer} \snm{T. Chayes}\thanksref{m1}\ead[label=e2]{jchayes@microsoft.com}}
\author{\fnms{Souvik} \snm{Dhara}\thanksref{m1,m2}\ead[label=e3]{sdhara@mit.edu}}
\and
\author{\fnms{Subhabrata} \snm{Sen}\thanksref{m3}
\ead[label=e4]{subhabratasen@fas.harvard.edu}
}
\runauthor{Borgs, Chayes, Dhara, Sen}

\affiliation{Microsoft Research\thanksmark{m1}, Massachusetts Institute of Technology\thanksmark{m2} and Harvard University \thanksmark{m3}}

\address{Microsoft Research,\\
One Memorial Drive,\\
Cambridge MA 02142.\\
\printead{e1}\\
\phantom{E-mail:\ }\printead*{e2}
}

\address{Department of Mathematics\\
Massachusetts Institute of Technology, \\
77, Massachusetts Avenue, Building 2, \\
Cambridge MA 02139. \\
\printead{e3}
}

\address{Department of Statistics\\
Harvard University, \\
One Oxford Street, Suite 400\\
Cambridge, MA 02138. \\
\printead{e4}
}
\end{aug}
\blfootnote{This article will appear as the supplementary material to \cite{BCDS18}.}
\begin{abstract}
Kallenberg (2005) provided a necessary and sufficient condition for the local finiteness  of a jointly exchangeable random measure on $\R_+^2$.
Here we note an additional condition that was missing in Kallenberg's theorem, but was implicitly used in the proof. 
We also provide a counter-example when the additional condition does not hold. 
\end{abstract}
\end{frontmatter}

\section{Characterization of local finiteness for jointly exchangeable measures on $\mathbb{R}^2_+$}
\citet[Theorem 9.24]{K2005} established a representation for all jointly exchangeable random measures on $\R_+^2$. This representation theorem has been the bedrock of  recent developments in the study of sparse graph limits \cite{VR15, VR16, BCCV17} and non-parametric Bayesian inference of network data \cite{CF17}. In this context, it is natural to restrict one's attention to locally finite random measures. For the convenience of the reader, we include a definition of locally finite random measures.
\cite[Definition 9.1.I]{DV08}.
\begin{defn}[Locally-finite random measures] \normalfont
A random measure on $\mathbb{R}_+^2$ is called locally finite if with probability 1, $\xi(B)<\infty$ for all bounded measurable $B \subset \mathbb{R}_+^2$.
\end{defn}

\noindent
 \citet[Proposition 9.25]{K2005} also states a characterization for local finiteness of any such exchangeable random measure. On closer inspection, it turns out that the local finiteness characterization has an extra implicit condition. To redress this issue, we include the complete statement of the characterization below (see Theorem \ref{thm:local_finiteness}) . Further, to make our treatment self-contained, we also include a complete proof in this section. We emphasize that the proof is almost the same as Kallenberg's original argument, once we add the additional condition.
We begin by recalling the notion of a jointly exchangeable measure.  Let $\sB(\R_+)$ denote the set of all Borel subsets of $\mathbb{R}_+$.

\begin{defn}
\normalfont
A measure $\xi$ on $\mathbb{R}^2_+$ is jointly exchangeable if $\xi(\phi^{-1}(A)\times \phi^{-1}(B))\eqd\xi(A\times B)$ for every measure preserving map $\phi:\R_+\to\R_+$, and for all $A,B\in \sB(\R_+)$.
\end{defn}

\begin{theorem}[\cite{K2005}]\label{thm:kallenberg}
A random measure $\xi$ on $\R_+^2$ is jointly exchangeable iff a.s.
\begin{eq} \label{eq:kallenberg-general}
\xi =& \sum_{i,j} f(\alpha, \vartheta_i, \vartheta_j, \zeta_{\{ i,j\}} ) \delta_{\tau_i, \tau_j} + \beta \lambda_{\mathrm{D}} + \gamma \lambda^2 \\
&+ \sum_{j,k} \Big( g(\alpha, \vartheta_j, \chi_{jk} ) \delta_{\tau_j, \sigma_{jk}} + g'(\alpha, \vartheta_j, \chi_{jk} ) \delta_{ \sigma_{jk}, \tau_j} \Big) \\
&+ \sum_{j} \Big( h(\alpha, \vartheta_j) ( \delta_{\tau_j} \otimes \lambda) + h'( \alpha, \vartheta_j) (\lambda \otimes \delta_{\tau_j} )  \Big) \\
&+ \sum_{k} \Big( l(\alpha, \eta_k) \delta_{\rho_k, \rho_k'} + l'(\alpha, \eta_k) \delta_{\rho_k', \rho_k} \Big),
\end{eq}
for some measurable functions $f \geq 0$ on $\R_+^4$, $g, g' \geq 0$ on $\R_+^3$, and $h,h',l,l' \geq 0$ on $\R_+^2$, a collection of iid uniform random variables $\{ \zeta_{\{i , j \} } : i \leq j \}$, some independent, unit rate Poisson processes $\{ (\tau_j , \vartheta_j) \}_{j\geq 1}$ and $\{ (\sigma_{ij} , \chi_{ij} ) \}_{j\geq 1}$ on $\R_+^2$ for each $i \geq 1$, and $\{(\rho_j, \rho_j', \eta_j) \}$ on $\R_3^+$, and an independent set of random variables $\alpha, \beta, \gamma \geq 0$. The latter can then be chosen to be non-random iff $\xi$ is extreme.
\end{theorem}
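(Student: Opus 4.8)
The plan is to prove the two implications separately. The ``if'' direction is a routine verification, while the ``only if'' direction carries all the weight, and I would obtain it by discretizing $\xi$ into a jointly exchangeable array, invoking the representation theory of such arrays, and passing to the continuum limit while tracking where mass concentrates. For the ``if'' direction, fix a measure-preserving $\phi:\R_+\to\R_+$; it suffices to check that $\xi\circ(\phi\times\phi)^{-1}\eqd\xi$, which one does term by term in \eqref{eq:kallenberg-general} using three elementary facts: (i) since $\phi_\ast\lambda=\lambda$, the image of a unit-rate Poisson process under $(x,y)\mapsto(\phi(x),y)$ or under $(x,y,z)\mapsto(\phi(x),\phi(y),z)$ is again a unit-rate Poisson process (no two points collide, the image intensity being non-atomic), so each driving process $\{(\tau_j,\vartheta_j)\}$, $\{(\sigma_{ij},\chi_{ij})\}$, $\{(\rho_j,\rho_j',\eta_j)\}$ is invariant in law; (ii) $\lambda_{\mathrm{D}}$ and $\lambda^2$ are literally invariant, since $\lambda_{\mathrm{D}}(\phi^{-1}A\times\phi^{-1}B)=\lambda(\phi^{-1}(A\cap B))=\lambda(A\cap B)$ and $\lambda^2(\phi^{-1}A\times\phi^{-1}B)=\lambda(A)\lambda(B)$; and (iii) $(\delta_\tau\otimes\lambda)\circ(\phi\times\phi)^{-1}=\delta_{\phi(\tau)}\otimes\lambda$. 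As all the driving objects are independent and $\xi\circ(\phi\times\phi)^{-1}$ is the same measurable functional of their $\phi$-images, the claim follows.

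For the ``only if'' direction I would discretize dyadically: put $I^n_j=[(j-1)2^{-n},j2^{-n})$ and $X^n_{jk}=\xi(I^n_j\times I^n_k)$. Applying joint exchangeability of $\xi$ to the measure-preserving maps that permute the level-$n$ intervals shows that $(X^n_{jk})_{j,k\ge1}$ is a jointly exchangeable array, and the arrays are consistent across levels, $X^n_{jk}=\sum_{j'\in\mathrm{ch}(j),\,k'\in\mathrm{ch}(k)}X^{n+1}_{j'k'}$. I would then invoke the representation theorem for jointly exchangeable arrays (an Aldous--Hoover-type, ``two-dimensional de Finetti'' statement, \cite[Chapter~7]{K2005}): there are iid uniform $\alpha$, $(U^n_j)_j$, $(\zeta^n_{\{j,k\}})_{j\le k}$ and a measurable $F_n$ with $X^n_{jk}=F_n(\alpha,U^n_j,U^n_k,\zeta^n_{\{j,k\}})$ off the diagonal, together with a companion formula for $X^n_{jj}$ in which $U^n_j$ enters twice. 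The core of the argument is to couple these level-$n$ randomizations into a single limiting object: using the essential uniqueness of the array representation one arranges the level-$(n{+}1)$ randomization to refine the level-$n$ one, and then passes to the limit by martingale and tightness arguments.

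Letting $n\to\infty$, one reads off the ten terms by tracking concentration of mass. Indices $j$ whose row-block or column-block keeps macroscopic mass shrink to points $\tau_i\in\R_+$, their randomizations $U^n_j$ converge to marks $\vartheta_i$, and $\{(\tau_i,\vartheta_i)\}$ is a unit-rate Poisson process by a Poisson limit (there are $2^n$ conditionally independent rows, each heavy with small probability). Mass in a block with both indices heavy gives the atomic term involving $f$; mass with exactly one heavy index gives, according to whether it further concentrates along the complementary line or stays diffuse there, either the line-atom terms $g,g'$ (driven by the auxiliary Poisson processes $\{(\sigma_{ij},\chi_{ij})\}$) or the line-Lebesgue terms $h,h'$; mass with no heavy index that is diffuse in the plane gives $\gamma\lambda^2$; and atoms of $\xi$ whose coordinates are not values of the heavy process assemble into the terms $l,l'$, driven by a Poisson process on $\R_+^3$. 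The separate unprimed/primed functions are forced because $\xi$, hence $(X^n_{jk})$, need not be symmetric. The diagonal blocks $X^n_{jj}$ need their own analysis: their diffuse part produces $\beta\lambda_{\mathrm{D}}$, while their atomic part redistributes into the $f,g,g',l,l'$ terms. Finally, $\xi$ is extreme precisely when the directing random element is a.s.\ constant, which is exactly the condition that $\alpha,\beta,\gamma$ can be taken non-random.

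I expect the main obstacle to be twofold. First, making rigorous the coupling of the level-$n$ array representations across scales and the passage to the limit, which requires the (non-trivial) essential uniqueness of the jointly exchangeable array representation together with control of the convergence of the randomizations. Second, the near-diagonal bookkeeping: correctly splitting the contributions of the diagonal blocks into the $\beta\lambda_{\mathrm{D}}$, $g$, $g'$ and $l,l'$ pieces, and more generally distinguishing an atom lying on a vertical or horizontal line through some $\tau_i$ from one in general position, since the level-$n$ grid resolves this only gradually.
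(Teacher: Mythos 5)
First, a point of reference: the paper does not prove this statement. Theorem~\ref{thm:kallenberg} is imported verbatim from \citet[Theorem 9.24]{K2005} and serves only as input to Corollary~\ref{cor:atomic_representation} and Theorem~\ref{thm:local_finiteness}, so there is no in-paper argument to compare yours against. Judged on its own terms, your ``if'' direction is essentially right and is the routine half: each driving Poisson process is invariant in law under the relevant coordinatewise application of a measure-preserving $\phi$ (the pushforward intensity is again $\lambda\otimes\lambda$ or $\lambda^{\otimes 3}$), the Lebesgue and diagonal terms are literally invariant, and independence lets you conclude termwise. Your ``only if'' plan --- discretize into dyadic blocks $X^n_{jk}=\xi(I^n_j\times I^n_k)$, observe these form consistent jointly exchangeable arrays, apply an Aldous--Hoover representation at each level, couple the randomizations across levels, and identify the limit --- is indeed the strategy behind Kallenberg's actual proof (going back to his work on exchangeable random measures in the plane), so the route is the correct one.

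The gap is that, as written, this is a road map rather than a proof: every genuinely difficult step is flagged as an ``obstacle'' instead of being carried out. Concretely: (a) the cross-scale coupling of the level-$n$ array representations rests on the essential-uniqueness theory for exchangeable-array representations, a substantial theorem whose application to arrays consistent under dyadic aggregation is exactly where the technical work lives; simply saying ``one arranges the level-$(n{+}1)$ randomization to refine the level-$n$ one'' does not establish that such a compatible family exists. (b) The identification of the ten components is asserted, not derived: that the heavy rows assemble into a unit-rate Poisson process $\{(\tau_j,\vartheta_j)\}$ on $\R_+^2$, that singly-heavy mass splits cleanly into the $g,g'$ (line-atom) versus $h,h'$ (line-Lebesgue) contributions, and that the leftover atoms form an independent Poisson process on $\R_+^3$ driving $l,l'$, each requires a separate limit argument, and the claim that atoms ``in general position'' can be distinguished from atoms on the lines $\{x=\tau_i\}$ only in the limit is precisely the bookkeeping you acknowledge you have not done. (c) The diagonal contribution $\beta\lambda_{\mathrm{D}}$ and the final assertion that $\alpha,\beta,\gamma$ may be taken non-random iff $\xi$ is extreme are stated without argument. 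So the proposal correctly identifies the known approach but does not close it; turning it into a proof would amount to reproducing a large part of \citet[Chapter 9]{K2005}, which is presumably why the present paper cites the result rather than proving it.
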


\noindent
Our interest centers around random adjacency measures, which are purely atomic almost surely. The next corollary, obtained as an immediate consequence of  Theorem~\ref{thm:kallenberg}, provides a representation theorem for all jointly exchangeable random atomic measures on $\R_+^2$.
\begin{corollary}\label{cor:atomic_representation}
A purely atomic random measure $\xi$ on $\R_+^2$ is jointly exchangeable iff a.s.
\begin{align}
\xi =& \sum_{i,j} f(\alpha, \vartheta_i, \vartheta_j, \zeta_{\{ i,j\}} ) \delta_{\tau_i, \tau_j} \nonumber\\
&+ \sum_{j,k} \Big( g(\alpha, \vartheta_j, \chi_{jk} ) \delta_{\tau_j, \sigma_{jk}} + g'(\alpha, \vartheta_j, \chi_{jk} ) \delta_{ \sigma_{jk}, \tau_j} \Big) \nonumber\\
&+ \sum_{k} \Big( l(\alpha, \eta_k) \delta_{\rho_k, \rho_k'} + l'(\alpha, \eta_k) \delta_{\rho_k', \rho_k} \Big),\label{eq:atomic_rep}
\end{align}
where  the functions $f, g, g', l, l'$, and the stochastic components are the same as {\rm Theorem~\ref{thm:kallenberg}}. Further, $\alpha \geq0$ may be chosen to be non-random iff $\xi$ is extreme.
\end{corollary}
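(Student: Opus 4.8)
The plan is to read off the corollary from Theorem~\ref{thm:kallenberg} by splitting the general representation \eqref{eq:kallenberg-general} into a purely atomic part and a diffuse part, and then using the hypothesis that $\xi$ is purely atomic to annihilate the diffuse part. The ``if'' direction is immediate: \eqref{eq:atomic_rep} is exactly \eqref{eq:kallenberg-general} specialized to $\beta=\gamma=0$ and $h\equiv h'\equiv0$, so any $\xi$ of that form is jointly exchangeable by Theorem~\ref{thm:kallenberg}, and it is purely atomic because it is a countable sum of nonnegative multiples of Dirac masses at points of $\R_+^2$.

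For the ``only if'' direction, let $\xi$ be jointly exchangeable and purely atomic. By Theorem~\ref{thm:kallenberg}, a.s.\ $\xi=\xi_{\mathrm{at}}+\xi_{\mathrm{diff}}$, where $\xi_{\mathrm{at}}$ is the part of \eqref{eq:kallenberg-general} built from the two-dimensional Dirac masses $\delta_{\tau_i,\tau_j}$, $\delta_{\tau_j,\sigma_{jk}}$, $\delta_{\sigma_{jk},\tau_j}$, $\delta_{\rho_k,\rho_k'}$, $\delta_{\rho_k',\rho_k}$ (so that $\xi_{\mathrm{at}}$ is exactly the right-hand side of \eqref{eq:atomic_rep}), and $\xi_{\mathrm{diff}}=\beta\lambda_{\mathrm D}+\gamma\lambda^2+\sum_j\big(h(\alpha,\vartheta_j)(\delta_{\tau_j}\otimes\lambda)+h'(\alpha,\vartheta_j)(\lambda\otimes\delta_{\tau_j})\big)$ collects the remaining terms. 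Then $\xi_{\mathrm{at}}$ is purely atomic, being a countable sum of point masses, whereas $\xi_{\mathrm{diff}}$ is diffuse: each of $\lambda_{\mathrm D}$, $\lambda^2$, $\delta_{\tau_j}\otimes\lambda$ and $\lambda\otimes\delta_{\tau_j}$ assigns measure zero to every singleton, and a countable sum of diffuse measures is diffuse. By uniqueness of the decomposition of a $\sigma$-finite measure into its atomic and diffuse parts, $\xi_{\mathrm{at}}$ and $\xi_{\mathrm{diff}}$ are, respectively, the atomic and diffuse parts of $\xi$; since $\xi$ is purely atomic its diffuse part vanishes, so $\xi_{\mathrm{diff}}=0$ and hence $\xi=\xi_{\mathrm{at}}$ a.s., which is precisely \eqref{eq:atomic_rep} with the same functions $f,g,g',l,l'$ and the same stochastic ingredients as in Theorem~\ref{thm:kallenberg}.

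For the extremality claim, Theorem~\ref{thm:kallenberg} gives that $(\alpha,\beta,\gamma)$ may be chosen non-random iff $\xi$ is extreme; and $\xi_{\mathrm{diff}}=0$ already forces $\beta=\gamma=0$ a.s.\ (since $\beta\lambda_{\mathrm D}\le\xi_{\mathrm{diff}}$ and $\gamma\lambda^2\le\xi_{\mathrm{diff}}$, while $\lambda_{\mathrm D}$ and $\lambda^2$ are not the zero measure), so $\beta$ and $\gamma$ are deterministic and ``$(\alpha,\beta,\gamma)$ non-random'' reduces to ``$\alpha$ non-random''. The only point that needs care is the verification that the diffuse part of \eqref{eq:kallenberg-general} is genuinely atomless --- in particular that the mixed product terms $\delta_{\tau_j}\otimes\lambda$ and $\lambda\otimes\delta_{\tau_j}$, though mutually singular with the rest of the measure, carry no atoms of their own --- together with the appeal to uniqueness of the atomic/diffuse decomposition; everything else is a direct consequence of Theorem~\ref{thm:kallenberg}.
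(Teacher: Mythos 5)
Your argument is correct and follows essentially the same route as the paper, which simply observes that since $\xi$ is a.s.\ purely atomic, all components with Lebesgue contributions must vanish. Your write-up merely makes explicit the uniqueness of the atomic/diffuse decomposition and the atomlessness of $\lambda_{\mathrm D}$, $\lambda^2$, $\delta_{\tau_j}\otimes\lambda$, $\lambda\otimes\delta_{\tau_j}$, which the paper leaves implicit.
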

\begin{proof}
$\xi$ is almost surely atomic, and thus all components with Lebesgue contributions vanish. This immediately leads to the representation of interest.
\end{proof}

\noindent

To study local finiteness of these random measures, it suffices to establish this characterization in the extreme case, when $\alpha$ is a constant, and thus for convenience of notation, we will suppress the dependence on $\alpha$ in the subsequent discussion. For any function $\phi$, we denote $\hat{\phi} = \phi \wedge 1$, and define
\begin{align*}
f_1(\cdot) = \int_{0}^{\infty} \int_{0}^{1} \hat{f}( \cdot, y,z) \,  \dif z\, \dif y  \,\,\,\,\,
f_2(\cdot) = \int_{0}^{\infty} \int_0^1 \hat{f} (y, \cdot, z) \, \dif y \,\dif z.
\end{align*}
Further, we define
\begin{align*}
g_1(\cdot) = \int_0^{\infty} \hat{g}(\cdot, y) \,\dif y,  \,\,\,\,\,g'_1(\cdot) = \int_0^{\infty}\hat{g}'(\cdot, y) \, \dif y
\end{align*}
For conciseness of notation, for any measurable function $\phi: \R_+ \to \R$, we set $\lambda \phi = \int_0^{\infty} \phi(y) \, \dif y$. To avoid confusion, for $B \in \sB(\R_+)$, we denote the Lebesgue measure as $\lambda\{ B \}$. Similarly, for any point process $\eta= \{ x_j : j \geq 1\}\subset \R_+$, we set $\eta\phi = \sum_{j \geq 1} \phi(x_j)$.  Further, for $\phi : \R_+^2 \to \R$, we define $\eta^2 \phi = \sum_{i,j} \phi(x_i , x_j)$.

\begin{theorem}
\label{thm:local_finiteness}
For a fixed $\alpha$, the random measure \eqref{eq:kallenberg-general} is a.s. locally finite iff the following conditions are satisfied:
\begin{itemize}
\item[(i)] $\lambda ( \hat{l} + \hat{l}' + \hat{h} + \hat{h}' )  < \infty$.
\item[(ii)] $\lambda \{ g_1 = \infty\} = \lambda \{ g'_1 = \infty\} =0$.
\item[(iii)] $\lambda( \hat{g}_1 + \hat{g}'_1) < \infty$.
\item[(iv)] $\lambda\{ f_i = \infty \} =0$ and $ \lambda \{ f_i >1 \} < \infty$ for $i =1,2$.
\item[(v)]  $ \int_0^{\infty} \int_0^{\infty} \int_0^1 \hat{f}(x,y,z) \,\ind{ f_1 (x) \vee f_2 (y)\leq 1 }\, \dif z\, \dif y\, \dif x  < \infty$.
\item[(vi)] $\int_0^{\infty} \int_0^1 \hat{f}(x,x,z) \,\dif z \, \dif x < \infty$.
\end{itemize}
\end{theorem}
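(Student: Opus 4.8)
The plan is to first reduce and then treat the groups of terms in \eqref{eq:kallenberg-general} one at a time. The general $\alpha$ case follows by conditioning (and the statement fixes $\alpha$), and a random measure on $\R_+^2$ is locally finite iff $\xi([0,n]^2)<\infty$ a.s.\ for every $n\in\N$; so I fix $t>0$ and expand $\xi([0,t]^2)$ via \eqref{eq:kallenberg-general} as a sum of five non-negative contributions --- from the $f$-atoms, the Lebesgue part $\beta\lambda_{\mathrm D}+\gamma\lambda^2$, the $(g,g')$-atoms, the $(h,h')$-lines and the $(l,l')$-atoms. The Lebesgue contribution equals $\beta\,\lambda_{\mathrm D}([0,t]^2)+\gamma t^2<\infty$ deterministically, and the other four are non-negative, so $\xi([0,t]^2)<\infty$ a.s.\ iff each of the four is a.s.\ finite; as the criteria will not involve $t$, treating one $t$ suffices. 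Two tools will be used throughout: (a) the classical dichotomy that for a Poisson process $\Pi$ of $\sigma$-finite intensity $m$ and measurable $\psi\ge0$, $\sum_{p\in\Pi}\psi(p)<\infty$ a.s.\ iff $m(\{\psi=\infty\})=0$ and $\int(\psi\wedge1)\,dm<\infty$; and (b) the elementary estimate $\E[(\sum_{p\in\Pi}\psi(p))\wedge1]\asymp\min\{1,\int(\psi\wedge1)\,dm\}$ (upper bound from $\min\{a+b,1\}\le\min\{a,1\}+\1\{b>0\}$, lower bound from a second-moment/Paley--Zygmund split). I will also take $f,g,g',h,h',l,l'$ finite-valued, so the ``$m(\{\psi=\infty\})=0$'' clauses below become the $\lambda\{\cdot=\infty\}=0$ conditions.

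For the parts governed by a single Poisson process: the $(l,l')$-contribution inside $[0,t]^2$ is $\sum_k(l+l')(\eta_k)\1\{\rho_k,\rho_k'\le t\}$, a Poisson functional of $\{(\rho_k,\rho_k',\eta_k)\}$, a.s.\ finite iff $\lambda((l+l')\wedge1)<\infty$; the $(h,h')$-lines contribute $t\sum_{\tau_j\le t}(h+h')(\vartheta_j)$, where $\{\vartheta_j:\tau_j\le t\}$ is a rate-$t$ Poisson process on $\R_+$, a.s.\ finite iff $\lambda((h+h')\wedge1)<\infty$ --- together, condition (i). For the $(g,g')$-atoms, collecting both orientations gives $\sum_j S_j\1\{\tau_j\le t\}$ with $S_j=\sum_k(g+g')(\vartheta_j,\chi_{jk})\1\{\sigma_{jk}\le t\}$; conditionally on the mother process the $S_j$ are independent with law depending only on $\vartheta_j$, so by the marking theorem $\{(\tau_j,\vartheta_j,S_j)\}$ is a Poisson process and tool (a) applies with $\psi(\tau,x,s)=s\1\{\tau\le t\}$: the null-set clause becomes $g_1,g_1'<\infty$ a.e., i.e.\ (ii), while the integrability clause, rewritten via tool (b) applied to each inner sum $S_x$ (whose intensity integrates $\hat g(x,\cdot)+\hat g'(x,\cdot)$, hence $\asymp g_1(x)+g_1'(x)$), becomes $\int\min\{1,g_1(x)+g_1'(x)\}\,dx<\infty$, i.e.\ (iii).

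The $f$-term I would split into its diagonal ($i=j$) and off-diagonal ($i\ne j$) parts. The diagonal part, $\sum_j f(\vartheta_j,\vartheta_j,\zeta_{jj})\1\{\tau_j\le t\}$, is a Poisson functional of $\{(\vartheta_j,\zeta_{jj})\}$, a.s.\ finite iff $\int_0^\infty\int_0^1\hat f(x,x,z)\,dz\,dx<\infty$; this is (vi) --- the term that is easy to overlook because it is invisible to the row/column analysis. For the off-diagonal part $A_{\mathrm{off}}=\sum_{i\ne j}f(\vartheta_i,\vartheta_j,\zeta_{\{i,j\}})\1\{\tau_i,\tau_j\le t\}=\sum_{\tau_i\le t}R_i$: (a) conditionally on $\vartheta_i=x$ the remaining mother points still form a Poisson process (Slivnyak), so $R_i$ is a Poisson sum whose intensity integrates $\hat f(x,\cdot,\cdot)$ to $f_1(x)$, whence a.s.\ finiteness of every row forces $\lambda\{f_1=\infty\}=0$ and symmetrically $\lambda\{f_2=\infty\}=0$; (b) attaching iid uniform labels $U_i$ and retaining in row $i$ only pairs with $U_j>U_i$ yields, given the mother process and labels, independent ``upper half-rows'' $R_i^\uparrow\le R_i$ with $\sum_{\tau_i\le t}R_i^\uparrow\le A_{\mathrm{off}}$ and $\E[R_i^\uparrow\wedge1\mid\vartheta_i=x,U_i=u]\asymp\min\{1,(1-u)tf_1(x)\}\gtrsim1$ on $\{f_1(x)>1,\,u\le1/2\}$, so if $\lambda\{f_1>1\}=\infty$ then infinitely many independent $[0,1]$-variables with means bounded below would sum to $\infty$, contradicting $A_{\mathrm{off}}<\infty$ a.s.; hence $\lambda\{f_1>1\}<\infty$, symmetrically $\lambda\{f_2>1\}<\infty$, giving (iv). (c) For the doubly-light remainder $A_{\mathrm{ss}}=\sum_{i\ne j}f(\vartheta_i,\vartheta_j,\zeta_{\{i,j\}})\1\{f_1(\vartheta_i)\le1,f_2(\vartheta_j)\le1,\tau_i,\tau_j\le t\}$, write $f=\hat f+(f-1)^+$: a first-moment bound shows the $(f-1)^+$-part is carried by finitely many pairs a.s.\ whenever (v) holds, so it suffices to treat $\sum_{i\ne j}\hat f(\cdots)\1\{\cdots\}$; integrating out the $\zeta$'s (distinct unordered pairs carry independent uniforms, so given the mother process this is a sum of independent bounded terms) reduces its a.s.\ finiteness to that of the Poisson $U$-statistic $\Sigma=\sum_{i\ne j}\bar f(\vartheta_i,\vartheta_j)\1\{f_1(\vartheta_i)\le1,f_2(\vartheta_j)\le1,\tau_i,\tau_j\le t\}$ with $\bar f=\int_0^1\hat f\,dz\le1$, whose mean is $t^2$ times the integral in (v). The direction $(\mathrm v)\Rightarrow A_{\mathrm{ss}}<\infty$ a.s.\ is Markov; for the converse I would thin the mother process into an independent Red/Blue pair, condition on Blue, bound $\Sigma\ge\sum_{i\in\mathrm{Red}}\rho_i^{(B)}\1\{f_1(\vartheta_i)\le1,\tau_i\le t\}$ with $\rho_i^{(B)}=\sum_{j\in\mathrm{Blue},\,f_2(\vartheta_j)\le1,\,\tau_j\le t}\bar f(\vartheta_i,\vartheta_j)$ now a bona fide mark, and apply tool (a) to the Red process: the criterion is $\int\1\{f_1(x)\le1\}(\rho_x^{(B)}\wedge1)\,dx<\infty$, and since $\E\rho_x^{(B)}\le\tfrac t2 f_1(x)\le\tfrac t2$ on $\{f_1(x)\le1\}$ the inner truncation is inactive, so this is equivalent to $\int\int\bar f\,\1\{f_1\le1,f_2\le1\}<\infty$, i.e.\ (v).

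The step I expect to be the main obstacle is the necessity of (v). A.s.\ finiteness of a second-order Poisson $U$-statistic is \emph{not} in general equivalent to finiteness of its mean --- a single rare but enormous cluster of points yields an a.s.\ finite sum of infinite mean --- and the equivalence survives here only because of the marginal bounds $f_1,f_2\le1$ built into $A_{\mathrm{ss}}$. Making this rigorous is precisely what forces the Red/Blue decoupling-by-thinning followed by conditioning, so that the quantity controlling a.s.\ finiteness becomes an honest Poisson integral over one process; the marginal bound then kills the truncation in the criterion and returns the untruncated integral of (v). This marginal-bound phenomenon, together with the fact that the diagonal $i=j$ escapes every row-and-column argument, is exactly what is easy to miss --- which is why conditions (v)--(vi) (one of which is the condition implicitly used but not stated in \cite{K2005}) must be listed explicitly.
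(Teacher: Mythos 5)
Your overall architecture coincides with the paper's: restrict to a box, dispose of the Lebesgue part, and treat the $(l,l',h,h')$, $(g,g')$ and $f$ groups separately; your derivation of (i) is identical, and your treatment of the $(g,g')$ terms via the marking theorem plus the estimate $\E[(\sum_{p\in\Pi}\psi(p))\wedge1]\asymp\min\{1,\int(\psi\wedge1)\,\dif m\}$ is equivalent to the paper's route through conditioning, the three-series lemma and Kallenberg's Lemma A3.6 with $\psi(x)=1-\e^{-x}$ (your derivation of the necessity of (ii) from the null-set clause of the Poisson dichotomy is in fact cleaner than the paper's illustrative counterexample). The real divergence is in the $f$-term: after integrating out the $\zeta$'s, the paper reduces to $\tilde{\vartheta}^2\hat f_3<\infty$ and simply \emph{cites} the two-dimensional Poisson integral criterion (Lemma \ref{lemma:integral_convergence}, i.e.\ Kallenberg's Theorem A3.5), whose conditions are exactly (iv)--(vi); you instead attempt to reprove that criterion from scratch via a diagonal/off-diagonal split, half-row decoupling, and Red/Blue thinning.

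That re-proof contains a genuine gap at precisely the point you flag as the main obstacle, the necessity of (v). After thinning and conditioning on Blue, the Poisson dichotomy applied to Red correctly yields that $T(B):=\int\ind{f_1(x)\le1}\,(\rho_x^{(B)}\wedge1)\,\dif x<\infty$ for almost every realization of Blue. But your next step --- ``since $\E\rho_x^{(B)}\le\tfrac t2 f_1(x)\le \tfrac t2$ the inner truncation is inactive, so this is equivalent to (v)'' --- conflates the random variable $\rho_x^{(B)}$ with its expectation, and more importantly jumps from the almost-sure finiteness of the Blue-measurable functional $T(B)$ to the finiteness of $\E[T(B)]$ (which is what is comparable to the integral in (v)). Almost-sure finiteness of a non-negative functional of a Poisson process does not imply finite mean --- this is the very phenomenon you invoke to explain why (v) is delicate --- so the decoupling has only pushed the problem down one level rather than resolved it; closing it requires a further argument (e.g.\ a second application of the one-dimensional dichotomy to the Blue process after an exchange of integration, or the iterated conditioning in Kallenberg's own proof of Theorem A3.5). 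A milder instance of the same issue appears in your half-row argument for $\lambda\{f_1>1\}<\infty$, where the three-series criterion requires the \emph{conditional} means $\E[R_i^\uparrow\wedge1\mid\text{mother process}]$ rather than the means averaged over the other points; that one is repairable by a routine law-of-large-numbers step. If you are content to quote Kallenberg's Theorem A3.5 as the paper does, everything you wrote for the $f$-term collapses to a two-line application of that lemma to $\hat f_3=\int_0^1\hat f(\cdot,\cdot,z)\,\dif z$, and the proof is complete.
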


\noindent
This theorem is a modification of  \citet[Proposition 9.25]{K2005}. The main difference is that our theorem contains the extra condition (ii), which was missing in \citet[Proposition 9.25]{K2005}, but was used implicitly in the proof.
We provide a complete proof of Theorem \ref{thm:local_finiteness} in the rest of this section.
To establish Theorem \ref{thm:local_finiteness}, we need a preliminary lemma about almost sure convergence of Poisson integrals.
\begin{lemma}[{{\cite[Theorem A3.5]{K2005}}}]\label{lemma:integral_convergence}
Let $\eta$ be a unit rate Poisson process on $\mathbb{R}_+$. Then for any measurable function $f:\mathbb{R}_+ \to \mathbb{R}_+$ and $f \geq 0$, we have $\eta f < \infty$ a.s. iff $\lambda \hat{f} < \infty$. Further, let  $h : \mathbb{R}_+^2 \to \mathbb{R}_+$ be measurable with $h \geq 0$. Setting $h_i = \lambda_j(\hat{h})$ for $j \neq i$, we have, $\eta^2 h < \infty$ a.s. iff
\begin{enumerate}
\item[(i)] $\lambda\{ h_1 = \infty \} = \lambda\{ h_2 = \infty \} = 0$,
\item[(ii)] $\lambda\{ h_1 >1 \} < \infty$, $\lambda\{h_2 >1 \} < \infty$,
\item[(iii)]  $\int_0^{\infty} \int_0^{\infty} \hat{h}(x,y) \ind{( h_1(x) \vee h_2(y) \leq 1 )} \dif y\, \dif x < \infty$
\item[(iv)] $ \int_0^{\infty} \hat{h}(x,x)  \dif x < \infty$
\end{enumerate}
\end{lemma}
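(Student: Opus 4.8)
The plan is to reduce first to a bounded kernel and then treat the two assertions in turn. Since $h$ is finite-valued and $\eta^2 h=\eta^2\hat h+\eta^2(h-1)^+$ with both summands nonnegative, and since $(h-1)^+$ is supported on $\{h>1\}$ where $\hat h=1$, we have $\eta^2\hat h\ge\sum_{i,j}\ind{h(x_i,x_j)>1}$; hence $\eta^2\hat h<\infty$ forces only finitely many pairs into $\{h>1\}$ and so $\eta^2(h-1)^+<\infty$ as well. Thus $\eta^2 h<\infty$ a.s. iff $\eta^2\hat h<\infty$ a.s., and as conditions (i)--(iv) are phrased entirely through $\hat h$, I may replace $h$ by $g:=\hat h\le 1$ throughout, which bounds every summand by $1$ and simplifies all later estimates. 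For the first assertion I would use the Laplace functional $\E[e^{-\eta f}]=\exp(-\int_0^\infty(1-e^{-f}))$ together with the elementary bound $(1-e^{-1})(t\wedge 1)\le 1-e^{-t}\le t\wedge 1$, which gives $\int(1-e^{-f})<\infty\iff\lambda\hat f<\infty$. When $\lambda\hat f=\infty$ the transform vanishes, so $\eta f=\infty$ a.s.; when $\lambda\hat f<\infty$ it is positive, so $\{\eta f<\infty\}$ has positive probability, and I upgrade this to probability one by the Kolmogorov $0$--$1$ law, writing $\eta f=\sum_k\eta|_{[k,k+1)}f$ as a series of independent nonnegative terms so that $\{\eta f<\infty\}$ is a tail event.

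For the second assertion I split $\eta^2 g$ into its diagonal $\sum_i g(x_i,x_i)=\eta\tilde g$, with $\tilde g(x)=g(x,x)$, and its off-diagonal part. The diagonal is governed by the first assertion and yields exactly (iv). For the off-diagonal part I partition $\R_+^2$ by the marginals into $R_1=\{g_1(x)>1\}$, $R_2=\{g_1(x)\le 1,\ g_2(y)>1\}$, and $R_3=\{g_1(x)\le 1,\ g_2(y)\le 1\}$. Sufficiency then runs region by region: on $R_3$ the off-diagonal first moment equals the integral in (iii), which is finite, so that contribution is a.s. finite; on $R_1$ condition (ii) makes the number of points $x_i$ with $g_1(x_i)>1$ finite a.s., while (i) together with the first assertion—applied through the Mecke/Slivnyak formula, so that the inner sum over the remaining points is a genuine Poisson integral with parameter $g_1(x_i)<\infty$—makes each inner sum $\sum_{j\ne i}g(x_i,x_j)$ finite a.s.; the region $R_2$ is symmetric with the roles of $g_1,g_2$ interchanged.

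The necessity direction is the crux and where I expect the real work; I argue by contraposition, showing the failure of any condition forces $\PR(\eta^2 g=\infty)>0$. Failures of (iv) and (i) are the previous arguments in reverse: for (i), if $\lambda\{g_1=\infty\}>0$ then with positive probability $\eta$ places a point in $\{g_1=\infty\}$, whose inner sum is infinite by the first assertion. The hard cases are (ii) and (iii), where the governing first moments diverge but individual contributions stay bounded, so the mean alone is not conclusive. Here the plan is a second-moment (Paley--Zygmund) argument: truncate the offending region to a bounded box so the off-diagonal mean $E_T\to\infty$, bound the variance by $C E_T$ uniformly in $T$, and conclude $\PR(\eta^2 g\ge E_T/2)\ge 1/8$ for all large $T$, hence $\PR(\eta^2 g=\infty)>0$. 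The delicate point, and the main obstacle, is the variance bound: its cross (``one shared point'') terms are $\int g_1(x)^2\,dx$ and $\int g_2(y)^2\,dy$ over the region, which need not be $O(E_T)$. I would tame them by decomposing the region dyadically in the size of $g_1$ (and of $g_2$), on each piece of which the marginal is bounded so that $g_1^2\lesssim g_1$, and by invoking condition (i)—the a.e. finiteness of the opposite marginal—to keep the column masses $\int g(x,y)\,dx$ finite. Assembling these pieces is precisely where the extra hypothesis (i) does the work that a naive first-moment computation misses.
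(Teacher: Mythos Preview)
The paper does not prove this lemma; it is cited from \cite[Theorem~A3.5]{K2005}, with a further pointer to \cite[Proposition~A.2]{BCCL18} ``for an independent proof in a slightly different setup,'' and is then used as a black box in the proof of Theorem~\ref{thm:local_finiteness}. There is therefore no in-paper argument to compare your proposal against.

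On the substance of your sketch: the reduction from $h$ to $g=\hat h$, the Laplace-functional proof of the one-dimensional claim, the diagonal/off-diagonal split, and the region-by-region sufficiency argument are all correct and standard. Your necessity argument is clean for (i) and (iv), and also for (iii), since on $R_3$ both marginals are already at most $1$ and the cross terms in the second moment satisfy $\int g_1^2\le\int g_1\le E_T$ automatically. The soft spot is the necessity of (ii): your dyadic decomposition of $\{g_1>1\}$ gives only $g_1^2\le 2^{k+1}g_1$ on the $k$th shell, which is not uniform in $k$, and it is not spelled out how condition (i) on the \emph{opposite} marginal converts this into a global $\Var=O(E_T)$ bound. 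This step is likely fixable---for instance by first capping row sums at $1$ and running Paley--Zygmund on $\sum_i\bigl(1\wedge\sum_{j\ne i}g(x_i,x_j)\bigr)\ind{g_1(x_i)>1}$, or by the two-stage conditioning used in the cited references---but as written it is the one place where your outline does not yet close.
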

\noindent
We refer the interested reader to \cite[Proposition A.2]{BCCL18} for an independent proof in a slightly different setup.
Given Lemma \ref{lemma:integral_convergence}, the proof is relatively straightforward. We include a proof here for the sake of completeness.
We will also use the following elementary lemma about convergence of a random series of independent non-negative random variables, which is a consequence of the Kolmogorov three series theorem.

\begin{lemma}\label{lem:summability}
Let $Z_1, Z_2, \cdots$ be an independent sequence of non-negative random variables. Then $\sum_j Z_j < \infty$ almost surely if and only if $\sum_j \E[1 \wedge Z_j ] < \infty$.
\end{lemma}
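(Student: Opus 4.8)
The plan is to apply the Kolmogorov three-series theorem to the independent summands $Z_j$ with truncation level $1$. Two preliminary reductions make this clean. First, since the $Z_j$ are non-negative, the partial sums $S_N = \sum_{j\le N} Z_j$ are non-decreasing, so ``$\sum_j Z_j < \infty$ a.s.''\ is the same statement as ``$\sum_j Z_j$ converges a.s.'' Second, split the target quantity as $\E[1\wedge Z_j] = \E\big[Z_j\,\ind{Z_j\le 1}\big] + \PR(Z_j>1)$, a sum of two non-negative terms; hence $\sum_j \E[1\wedge Z_j]<\infty$ holds if and only if both $\sum_j \E[Z_j\,\ind{Z_j\le1}]<\infty$ and $\sum_j \PR(Z_j>1)<\infty$. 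These are exactly the ``tail-probability'' and ``truncated-mean'' series appearing in the three-series theorem at level $1$.

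For the ``if'' direction I would assume $\sum_j \E[1\wedge Z_j]<\infty$, so by the reduction above the tail-probability series and the truncated-mean series both converge. It then remains to control the third (variance) series, and here I would use that on $\{Z_j\le1\}$ one has $Z_j^2\le Z_j$, whence $\var{Z_j\,\ind{Z_j\le1}} \le \E\big[Z_j^2\,\ind{Z_j\le1}\big] \le \E\big[Z_j\,\ind{Z_j\le1}\big]$; summing over $j$ shows the variance series converges too. The sufficiency part of the three-series theorem then yields a.s.\ convergence of $\sum_j Z_j$, i.e.\ $\sum_j Z_j<\infty$ a.s.

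For the ``only if'' direction I would assume $\sum_j Z_j<\infty$ a.s., equivalently that the series converges a.s., and invoke the necessity part of the three-series theorem (again at truncation level $1$): this forces $\sum_j \PR(Z_j>1)<\infty$ and $\sum_j \E[Z_j\,\ind{Z_j\le1}]<\infty$. Adding these two bounds gives $\sum_j \E[1\wedge Z_j]<\infty$, completing the equivalence.

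There is no serious obstacle in this argument; it is essentially a bookkeeping exercise around a standard theorem. The only two points deserving a line of justification are the passage from a.s.\ finiteness to a.s.\ convergence via monotonicity of the partial sums, and the inequality $Z_j^2\le Z_j$ on $[0,1]$, which is what lets the variance series be dominated by the truncated-mean series so that the single scalar condition $\sum_j \E[1\wedge Z_j]<\infty$ encodes all three hypotheses of the three-series theorem simultaneously.
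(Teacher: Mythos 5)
Your proof is correct and follows exactly the route the paper intends: the paper states this lemma without a written proof, remarking only that it is ``a consequence of the Kolmogorov three series theorem,'' and your argument is precisely that derivation, with the decomposition $\E[1\wedge Z_j]=\E[Z_j\ind{Z_j\le 1}]+\PR(Z_j>1)$ and the domination $\var{Z_j\ind{Z_j\le 1}}\le \E[Z_j\ind{Z_j\le 1}]$ correctly handling all three series.
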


\begin{proof}[Proof of Theorem \ref{thm:local_finiteness}]
The measure $\xi$ is jointly exchangeable -- thus for establishing local finiteness, it suffices to restrict the measure to $[0,1]^2$, without loss of generality. Then we can write
\begin{eq}
&\xi([0,1]^2) \\
&= \sum_{i,j} f(\vartheta_i, \vartheta_j , \zeta_{\{ i,j \}} ) \ind{\tau_i \leq 1, \tau_j \leq 1} + \sqrt{2}\beta + \gamma \nonumber\\
 &+ \Big(  \sum_{j,k} g(\vartheta_j, \chi_{jk}) \ind{\tau_j \leq 1, \sigma_{jk} \leq 1} + \sum_{j,k} g'(\vartheta_j , \chi_{jk}) \ind{\sigma_{jk} \leq 1, \tau_j \leq 1} \Big) \nonumber\\
&+ \sum_j h(\vartheta_j) \ind{\tau_j \leq 1} + \sum_j h'( \vartheta_j) \ind{\tau_j \leq 1} \nonumber\\
&+ \sum_{k} \Big( l(\eta_k) \ind{\rho_k \vee \rho_k' \leq 1} + l'(\eta_k) \ind{\rho_k' \vee \rho_k \leq 1} \Big). \label{eq:restriction}
\end{eq}
\noindent
The terms $\sqrt{2} \beta$ and $\gamma$ are trivially finite for all random variables $\beta, \gamma \geq 0$. Introduce the point processes
\begin{align}
\tilde{\vartheta} = \sum_{ j} \delta_{\vartheta_j} \ind{\tau_j \leq 1}\,\,\,\,\,\, \tilde{\eta} = \sum_{k} \delta_{\eta_k} \ind{\rho_k \vee \rho_k' \leq 1}.  \nonumber
\end{align}
We note that these are unit point Poisson processes on $\mathbb{R}_+$, and thus the last four terms in \eqref{eq:restriction} are finite if and only if $ \tilde{\vartheta} h + \tilde{\vartheta} h' + \tilde{\eta} l + \tilde{\eta} l' <\infty $ almost surely. An application of Lemma~\ref{lemma:integral_convergence} immediately characterizes this convergence, and yields condition $(i)$ in Theorem~\ref{thm:local_finiteness}.

Next, we establish that conditions $(ii)$ and $(iii)$ in Theorem \ref{thm:local_finiteness} are sufficient to guarantee almost sure finiteness of the $g, g'$ terms in \eqref{eq:restriction}.
To this end, condition on the Poisson process $\{(\tau_j, \vartheta_j)\}_{j \geq 1}$.
Define
\begin{align}
\tilde{\chi}_j = \sum_{k} \delta_{\chi_{jk}} \ind{ \sigma_{jk} \leq 1 }. \nonumber
\end{align}
Conditionally on the Poisson point process $\{(\tau_j, \vartheta_j)\}_{j \geq 1}$, $(\tilde{\chi}_j)_{j\geq 1}$ forms a collection of independent  unit Poisson point process on $\mathbb{R}_+$.
Note that Condition (ii) of Theorem~\ref{thm:local_finiteness} implies $\sum_{k} g(\vartheta_j, \chi_{jk}) \ind{ \tau_j \leq 1, \sigma_{jk} \leq 1 } < \infty$ almost surely for all $j \geq 1$.
Thus, for all $j\geq 1$, the random variables $\sum_{k} g(\vartheta_{j}, \chi_{jk} ) \ind{ \tau_j \leq 1, \sigma_{jk} \leq 1}$ are independent $\mathbb{R}_+$ valued random variables.
To characterize the convergence of this random series, we condition on $\{(\vartheta_j, \tau_j)\}_{ j \geq 1}$ and apply Lemma \ref{lem:summability}. This yields that the series is finite almost surely iff
\begin{align}\label{eq:g-cond-finite}
\sum_{j} \E\Big[ 1 \wedge \sum_{k} g(\vartheta_j, \chi_{j,k}) \ind{  \sigma_{jk} \leq 1} \Big| \vartheta_j, \tau_j \Big]  {\ind{\tau_j \leq 1}}< \infty.
\end{align}
Let us denote $\psi(x) = 1 - e^{-x}$. Using $\frac{1\wedge x}{2} \leq \psi(x) < 1 \wedge x$ for $x >0$, it is easy to see that \eqref{eq:g-cond-finite} is equivalent to
\begin{align}
&\sum_{j \geq 1} \E\Big[ \psi\Big( \tilde{\chi}_j g(\vartheta_j , \cdot)  \Big) | \vartheta_j , \tau_j  \Big] {\ind{\tau_j \leq 1}} \\
& \hspace{.5cm}= \sum_{j \geq 1} \psi\Big( \int_0^{\infty} \psi( g(\vartheta_j, y )) \,\dif y  \Big) \ind{\tau_j \leq 1} < \infty \nonumber
\end{align}
where the last equality follows from {\cite[Lemma A3.6]{K2005}}. Note that this condition is satisfied once (ii) is given, and ensures the almost sure convergence of the sum, conditioned on the process $\{ (\vartheta_j, \tau_j)\}_{ j \geq 1}$.
Finally, we ``uncondition" on the point process $\{(\vartheta_j, \tau_j)\}_{  j \geq 1}$, and note that  given condition (ii), the almost sure convergence of the $g$ term in \eqref{eq:restriction} is equivalent to
\begin{align}
{\tilde{\vartheta}\bigg(\psi\Big( \int_0^{\infty} \psi( g( \cdot , y )) \,\dif y   \Big)\bigg) < \infty }\nonumber
\end{align}
almost surely.
To characterize the convergence of this sum, we again apply Lemma \ref{lemma:integral_convergence}, and note that this finiteness is equivalent to
\begin{align}
\int_0^{\infty} \psi\Big( \int_0^{\infty} \psi \circ g(x, y )  \dif y  \Big) \dif x < \infty.  \nonumber
\end{align}
Finally, using $\frac{1\wedge x}{2} \leq \psi(x) \leq 1\wedge x$, it is not too hard to see that the condition above is equivalent to (iii). The argument for the $g'$ is exactly same, and is thus omitted. This establishes the sufficiency of (ii) and (iii) for the almost sure finiteness of the relevant terms in \eqref{eq:restriction}. It remains to establish the necessity of these conditions. Indeed, consider the function
\begin{align} \label{eq:counter-example}
g(x,y) = \begin{cases}
1 &\textrm{if}\, x \in [0,1] , y\in [0,1] \cup [2,3] \cup \cdots \\
0 &\textrm{o.w.}
\end{cases}
\end{align}
In this case, condition (ii) is violated, and it is easy to see that the corresponding $g$ term in \eqref{eq:restriction} is infinite with positive probability. Thus (ii) is indeed necessary. Given (ii),  the rest of the proof above is necessary and sufficient, which establishes the necessity of (iii) as well.

Finally, we need to establish necessary and sufficient conditions for the almost sure finiteness of $f$ term in \eqref{eq:restriction}. To this end, first condition on the point process $\{ (\vartheta_j, \tau_j )\}{ j \geq 1 }$, and {using Lemma~\ref{lem:summability}}, the finiteness in this case is equivalent to
\begin{align}
&\sum_{i,j} \E[ 1 \wedge f(\vartheta_i, \vartheta_j , \zeta_{\{i,j\}} ) | \vartheta_i , \tau_i ] \ind{\tau_i \vee \tau_j \leq 1} \\
& \hspace{.5cm} = \sum_{i,j} \hat{f}_3( \vartheta_i, \vartheta_j ) \ind{\tau_i  \leq 1, \tau_j \leq 1} < \infty, \nonumber
\end{align}
where we define $\hat{f}_3 (x,y) = \int_0^1 \hat{f} (x,y,z) \dif z$. Finally, we ``uncondition" on the point process $\{ (\vartheta_j ,\tau_j) : j \geq 1\}$, and note that the convergence of the $f$ term in \eqref{eq:restriction} is equivalent to
\begin{eq}
\sum_{i,j}  \hat{f}_3(\vartheta_i, \vartheta_j) \ind{\tau_i \leq 1 ,\tau_j \leq 1}  =  \tilde{\vartheta}^2 \hat{f}_3  < \infty  \nonumber
\end{eq}
almost surely. The rest of the proof follows by a direct application of Lemma \ref{lemma:integral_convergence}, { and noting that $0\leq \hat{f}_3\leq 1$.}
\end{proof}

\begin{remark}\normalfont \eqref{eq:counter-example} clearly  constructs a counter-example for the local finiteness without condition (ii) in Theorem~\ref{thm:local_finiteness}.
\end{remark}

\section{Characterization of random adjacency measures}
%
The next result characterizes all random adjacency measures on $\R^2_+$. To keep the discussion self-contained, we recall the concepts under consideration. In the subsequent discussion, $\cN(\R_+^2)$  denotes the set of locally finite counting measures on $\mathbb{R}_+^2$, equipped with the vague topology.

\begin{defn}[Random adjacency measure] \label{defn:adj_measure}
\normalfont
An adjacency measure
is a
measure  $\xi\in\cN(\R_+^2)$
such that $\xi(A\times B) = \xi(B\times A)$ for all $A,B\in \sB(\R_+)$.
A random adjacency measure is a $\cN(\R_+^2)$ valued random variable that is almost surely an adjacency measure. It is called \emph{exchangeable} if $\xi(\phi^{-1}(A)\times \phi^{-1}(B))\eqd\xi(A\times B)$ for every measure preserving map $\phi:\R_+\to\R_+$.
\end{defn}

\begin{defn}[Multigraphex]\label{def:multigraphex}
\normalfont
A multigraphex is a triple $\cW = (W,S,I)$ such that $I\in \ell_1$, $S:\R_+  \mapsto \ell_1$ is a measurable function, and $W:\R_+^2\times\N_0\mapsto \R_+$ is a measurable function satisfying  $W(x,y,k) = W(y,x,k),$  $\sum_{k=0}^{\infty} W(x,y,k) = 1,$ for any $x,y\in\R_+$ and $k\in\N_0$.
We will assume throughout that, $ \min\{ \sum_{k \geq 1}S(\cdot, k), 1\}$ is integrable. Further, setting $\mu_W(\cdot) = \int (1- W( \cdot ,y,0) )\dif y$, we assume that
\begin{enumerate}[(a)]
\item $\Lambda(\{x: \mu_W(x) = \infty \})=0$ and $\Lambda(\{x: \mu_W(x) >1 \})<\infty$,
\item $\int (1- W(x,y,0))\ind{\mu_W(x) \leq 1 }\ind{\mu_W(y) \leq 1 } \dif y \dif x <\infty$,
\item $\int  (1- W(x,x,0))\dif x <\infty$.
\end{enumerate}
\end{defn}

\begin{defn}[Adjacency measure of a multigraphex] \label{defn:adj_multigraphex}
\normalfont
Given any multigraphex $\cW = (W,S,I)$,
define $\xi_{\sss \cW}$, the \emph{random adjacency measure
generated by $\cW$}  as follows:
\begin{align*}
\xi_{\sss \cW} &= \sum_{i\neq j} \zeta_{ij} \delta_{(\theta_i,\theta_j)} + \sum_{i} \zeta_{ii} \delta_{(\theta_i,\theta_i)}+ \sum_{j,k} g(\theta_j, \chi_{jk})\big(\delta_{(\theta_j,\sigma_{jk})}+\delta_{(\sigma_{jk},\theta_j)}\big)
\\
&\hspace{1cm}+ \sum_k  h(\eta_k'') \big(\delta_{(\eta_k,\eta_k')}+\delta_{(\eta_k',\eta_k)}\big),\\
\zeta_{ij} &= r, \quad \text{if } \sum_{l=0}^{r-1}W(v_i,v_j,l) \leq U_{\{i,j\}} \leq  \sum_{l=0}^{r}W(v_i,v_j, l),\\
g(\theta_j, \chi_{jk}) &= r, \quad \text{if } \sum_{l=0}^{r-1} S(v_j, l) \leq \chi_{jk} \leq \sum_{l=0}^{r} S(v_j, l), \\
h(\eta_k '') &= r, \quad\text{if } \sum_{l=0}^{r-1} I(l) \leq \eta_k'' \leq \sum_{l=0}^{r} I(l).
\end{align*}
where $(U_{\{i,j\}})_{i,j\geq 1}$ is a collection of independent uniform[0,1] random variables, $\{(\theta_j,v_j)\}_{j\geq 1}$, $\{(\chi_{jk},\sigma_{jk})\}_{k\geq 1}$ for all $j\geq 1$ are unit rate Poisson point processes on $\R_+^2$, and $(\eta_k,\eta_k',\eta_k'')_{k\geq 1}$ is a unit rate Poisson point processes on $\R_+^3$, where all the above Poisson point processes are independent of each other and $(U_{\{i,j\}})_{i,j\geq 1}$.
\end{defn}

\begin{proposition}\label{prop:rand_adj_characterization}
Every  random adjacency measure is the adjacency measure corresponding to some (possibly random) multigraphex.
\end{proposition}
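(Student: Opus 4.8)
The plan is to reduce to Kallenberg's representation and then rewrite it in the form of Definition~\ref{defn:adj_multigraphex}. Let $\xi$ be an exchangeable random adjacency measure (the hypothesis being necessary, since $\xi_{\sss\cW}$ is always exchangeable). Every element of $\cN(\R_+^2)$ is locally finite and purely atomic, so $\xi$ is a jointly exchangeable purely atomic random measure and Corollary~\ref{cor:atomic_representation} applies, giving \eqref{eq:atomic_rep}. There $\alpha$ is independent of all the Poisson processes, so we may condition on $\alpha=a$ and treat \eqref{eq:atomic_rep} as a representation with fixed parameter $\alpha=a$; since $\xi$ is almost surely locally finite, Theorem~\ref{thm:local_finiteness} then shows that conditions (i)--(vi) hold for $\PR_\alpha$-almost every $a$.

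Next I would carry out two reductions. Because the Poisson processes $\{(\tau_i,\vartheta_i)\}$, $\{(\sigma_{jk},\chi_{jk})\}_j$ and $\{(\rho_k,\rho_k',\eta_k)\}$ are mutually independent, almost surely no two of the atoms appearing in \eqref{eq:atomic_rep} coincide; hence the $\xi$-mass of each such atom equals its single coefficient, which must lie in $\N_0$ since $\xi$ is a counting measure. A standard genericity argument (Poisson points are ``generic'', plus Fubini) upgrades this to: $f(a,\cdot,\cdot,\cdot)$, $g(a,\cdot,\cdot)$, $g'(a,\cdot,\cdot)$, $l(a,\cdot)$, $l'(a,\cdot)$ are $\N_0$-valued Lebesgue-almost everywhere, and we redefine them to be $\N_0$-valued everywhere. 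Second, $\xi(A\times B)=\xi(B\times A)$ says $\xi$ is invariant under $(x,y)\mapsto(y,x)$; comparing the masses this imposes at the atoms $(\tau_i,\tau_j)$, $(\tau_j,\sigma_{jk})$ and $(\rho_k,\rho_k')$ gives, for almost every choice of arguments, $f(a,x,y,z)=f(a,y,x,z)$, $g(a,x,y)=g'(a,x,y)$ and $l(a,y)=l'(a,y)$. Redefining $f(a,x,y,z):=f(a,x\wedge y,x\vee y,z)$ and discarding $g',l'$ in favour of $g,l$, the representation becomes, almost surely,
\begin{multline*}
\xi=\sum_{i\ne j}f(a,\vartheta_i,\vartheta_j,\zeta_{\{i,j\}})\,\delta_{\tau_i,\tau_j}+\sum_{i} f(a,\vartheta_i,\vartheta_i,\zeta_{\{i,i\}})\,\delta_{\tau_i,\tau_i}\\
+\sum_{j,k}g(a,\vartheta_j,\chi_{jk})\big(\delta_{\tau_j,\sigma_{jk}}+\delta_{\sigma_{jk},\tau_j}\big)+\sum_k l(a,\eta_k)\big(\delta_{\rho_k,\rho_k'}+\delta_{\rho_k',\rho_k}\big),
\end{multline*}
with $f$ symmetric and $f,g,l$ all $\N_0$-valued.

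Now I would define, for each good $a$, a multigraphex $\cW_a=(W_a,S_a,I_a)$: let $W_a(x,y,k)$ be the Lebesgue measure of $\{z\in[0,1]:f(a,x,y,z)=k\}$, let $S_a(x,k)$ be the Lebesgue measure of $\{y\in\R_+:g(a,x,y)=k\}$ for $k\ge1$ and $0$ for $k=0$ (reset to $0$ on the null set of $x$ where it fails to be summable), and let $I_a(k)$ be the Lebesgue measure of $\{y\in\R_+:l(a,y)=k\}$ for $k\ge1$ and $0$ for $k=0$. Then $W_a(x,y,\cdot)$ is a probability vector on $\N_0$, symmetric in $(x,y)$ because $f$ is, and $\mu_{W_a}(x)=\int(1-W_a(x,y,0))\,\dif y=\int_0^\infty\!\int_0^1\hat f(a,x,y,z)\,\dif z\,\dif y=f_1(x)=f_2(x)$; so conditions (iv), (v), (vi) of Theorem~\ref{thm:local_finiteness} are exactly conditions (a), (b), (c) of Definition~\ref{def:multigraphex}. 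Similarly $\sum_{k\ge1}S_a(x,k)=g_1(x)$, which is finite almost everywhere by (ii) and satisfies $\int(g_1\wedge1)\,\dif x<\infty$ by (iii), while $\sum_{k\ge1}I_a(k)=\int\hat l\,\dif x<\infty$ by (i); hence $\cW_a$ is a genuine multigraphex for almost every $a$, and $\cW:=\cW_\alpha$ (measurable in $\alpha$, hence a random multigraphex) is the candidate. It remains to check $\xi_{\sss\cW}\eqd\xi$. Conditioning on $\alpha=a$ and on the process $\{(\tau_i,\vartheta_i)\}$, in both $\xi$ and $\xi_{\sss\cW_a}$ the ``$f$-part'', ``$g$-part'' and ``$l$-part'' are conditionally independent and their conditional laws match: for the $f$-part because the masses are independent across pairs $i\le j$ with common conditional law $W_a(\vartheta_i,\vartheta_j,\cdot)$; for the $g$- and $l$-parts because the images of the auxiliary Poisson processes under $(\sigma,\chi)\mapsto(\sigma,g(a,\vartheta_j,\chi))$ and $(\rho,\rho',\eta)\mapsto(\rho,\rho',l(a,\eta))$ are Poisson on $\R_+\times\N_0$ and on $\R_+^2\times\N_0$ whose restrictions to $\{k\ge1\}$ carry the same intensities $S_a(\vartheta_j,\cdot)$ and $I_a(\cdot)$ as the thresholded constructions in Definition~\ref{defn:adj_multigraphex}. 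Averaging over $\alpha$ then gives $\xi_{\sss\cW}\eqd\xi$.

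I expect the last step --- the exact distributional identification $\xi_{\sss\cW}\eqd\xi$ --- to be the main obstacle, mostly as careful bookkeeping: tracking the conditioning on the random $\alpha$ (which is precisely what forces the multigraphex to be random), using that the $f$-part and $g$-part share the process $\{(\tau_j,\vartheta_j)\}$ so that the match must be made conditionally on it, and noting that the ``$k=0$'' components (which carry infinite intensity on the mark space in the multigraphex construction) are harmless because they place no mass. The identification of conditions (i)--(vi) with (a)--(c) is routine once one uses $f_1=f_2$, and the reduction to integer-valued symmetric representing functions is standard.
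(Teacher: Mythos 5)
Your argument is correct and follows essentially the same route as the paper's own (sketched) proof: apply Corollary~\ref{cor:atomic_representation}, use symmetry and integrality of $\xi$ to reduce to symmetric $\N_0$-valued $f$ with $g=g'$, $l=l'$, define $W$, $S$, $I$ as Lebesgue measures of level sets, and translate conditions (i)--(vi) of Theorem~\ref{thm:local_finiteness} into the multigraphex integrability conditions. You supply more detail than the paper on the final distributional identification $\xi_{\sss\cW}\eqd\xi$ (which the paper delegates to the arguments of Veitch and Roy), and your observation that exchangeability is the implicit hypothesis in the statement is accurate.
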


\begin{remark} \normalfont
The corresponding characterization for graphexes was stated  in \citet[Theorem 4.9]{VR15}. Their proof is based on the Kallenberg representation theorem for exchangeable random measures on $\R_+^2$ \cite{Kal90}, and the characterization for exchangeable random measures to be locally finite a.s. \cite[Prop 9.25]{K2005}. The missing condition in the  local finiteness criterion \cite[Prop 9.25]{K2005}, however, necessitates a slight modification to their proof. Armed with Theorem~\ref{thm:local_finiteness}, we generalize the result of \citet[Theorem 4.9]{VR15} to multigraphexes, and provide a proof sketch in the rest of this section.
\end{remark}

Armed with Corollary~\ref{cor:atomic_representation} and Theorem~\ref{thm:local_finiteness}, we furnish a proof of Proposition~\ref{prop:rand_adj_characterization}.

\begin{proof}[Proof of Proposition~\ref{prop:rand_adj_characterization}]
Given Corollary~\ref{cor:atomic_representation} and Theorem~\ref{thm:local_finiteness}, the proof is similar to that of Theorem 4.7, 4.9 in \cite{VR15}. Thus we sketch the proof, and refer the interested reader to \cite{VR15} for complete details.

Let $\xi$ be a random adjacency measure. Corollary~\ref{cor:atomic_representation} immediately implies that $\xi$ has a representation of the form \eqref{eq:atomic_rep}. Now, symmetry of $\xi$  enforces
$f(\cdot, x,y, \cdot) = f(\cdot, y, x, \cdot)$, $g = g'$ and $l = l'$. Further, $\xi \in \cN(\R^2_+)$, which specifies that $f$, $g$ and $l$ are actually $\N_0$ valued in this case.  Given these observations,
for any fixed $a$ and $k \in \N$, we define
\begin{align}
W(a,x,y,k) =& \lambda\{ z\in [0,1]: f(a, x,y, z)= k \} . \nonumber  \\
S(a, x, k)  =& \lambda \{ y \in \R_+ : g(a, x, y) = k \} \nonumber \\
I(a, k) =& \lambda\{ y \in \R_+ : l(a,y) =k \}. \nonumber
\end{align}
Finally, we set $W(a,x,y,0) = 1 - \sum_{k =1}^{\infty} W(a, x,y ,k)$. Note that $\xi$ is locally finite, and thus by Theorem~\ref{thm:local_finiteness} condition $(ii)$, $g_1 < \infty$ for Lebesgue almost all $x \in \R_+$, ensuring that $S(a,x.k)$ is well defined. For all other $x$, we define $S(a, x, k)$ arbitrarily in $\ell_1$.  Similarly, Theorem~\ref{thm:local_finiteness} condition $(i)$ ensures that $I$ is well defined almost surely. Also, we note that $f(\cdot, x, y , \cdot) = f(\cdot, y, x, \cdot)$ enforces the symmetry of $W$. Finally, it is easy to see that the constraints of  Theorem~\ref{thm:local_finiteness} translate directly to the integrability conditions imposed on multigraphexes in Definition~\ref{def:multigraphex}.

Finally, it remains to establish that the random adjacency measure $\xi$ is completely specified, given the multigraphex $(I,S, W)$. This follows from the arguments delineated in the proof of \cite[Theorem 4.7]{VR15}, and is thus omitted.
\end{proof}

We end this section with a criterion for the multigraphex in Proposition~\ref{prop:rand_adj_characterization} to be non-random.
The proof can be carried out in an identical manner for point processes in $\cN(\R_+^2)$ as \cite[Lemma 3.4]{BCCV17} and thus omitted.
\begin{proposition}
Let $\Gamma \in \cN(\R_+^2)$ be a jointly exchangeable adjacency measure.
Then $\Gamma$ is extremal if and only if for all $0<r < r'<\infty$, $\Gamma([0, r)^2 \cap  \cdot)$ and $\Gamma([r, r')^2 \cap  \cdot)$ are independent.
\end{proposition}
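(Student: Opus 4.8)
The plan is to prove both directions using the Kallenberg-type representation established above together with its extremal characterization (Theorem~\ref{thm:kallenberg} and Corollary~\ref{cor:atomic_representation}), and the structural fact that extremality of a jointly exchangeable random measure is equivalent to an ergodicity/triviality property of the associated invariant $\sigma$-field. For the \emph{only if} direction, suppose $\Gamma$ is extremal. By Corollary~\ref{cor:atomic_representation} (applied with non-random $\alpha$, which is permissible precisely because $\Gamma$ is extreme), $\Gamma$ is built from the independent Poisson ingredients $\{(\tau_j,\vartheta_j)\}$, $\{(\sigma_{jk},\chi_{jk})\}$, $\{(\rho_k,\rho_k',\eta_k)\}$, and the i.i.d.\ marks $\zeta_{\{i,j\}}$ via deterministic functions. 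The key observation is that the restriction $\Gamma([0,r)^2\cap\,\cdot\,)$ is a measurable function only of those ingredients whose spatial coordinates fall in $[0,r)$ (i.e.\ the points $\tau_j\in[0,r)$ together with their associated sub-processes and marks, the points $\sigma_{jk}\in[0,r)$, and the $\rho_k,\rho_k'\in[0,r)$), while $\Gamma([r,r')^2\cap\,\cdot\,)$ depends only on the corresponding ingredients with coordinates in $[r,r')$. By the independence property of Poisson processes on disjoint regions and the independence of the i.i.d.\ mark families, these two collections of ingredients are independent, hence so are the two restricted measures.

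For the \emph{if} direction, I would argue by contraposition: if $\Gamma$ is not extremal, then by Choquet/ergodic-decomposition theory for jointly exchangeable random measures (as in \citet{K2005}), $\Gamma$ is a genuine mixture, so there is a nontrivial random parameter $\alpha$ (the directing random element) that is invariant under the exchangeability action. Because $\alpha$ is measurable with respect to the tail/invariant $\sigma$-field, it is a measurable function of $\Gamma([0,r)^2\cap\,\cdot\,)$ for \emph{every} $r>0$ — indeed one can recover $\alpha$ from the asymptotic behaviour of $\Gamma$ on any neighbourhood of the origin, by the same rescaling argument that underlies the representation theorem (the ``density'' statistics of $\Gamma$ restricted to $[0,\epsilon)^2$ converge to functionals of $\alpha$ as $\epsilon\downarrow0$, and joint exchangeability lets one transport this to an arbitrary window $[r,r')$ as well). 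Consequently $\alpha$ is simultaneously $\sigma(\Gamma([0,r)^2\cap\,\cdot\,))$-measurable and $\sigma(\Gamma([r,r')^2\cap\,\cdot\,))$-measurable; if these two $\sigma$-fields were independent, $\alpha$ would be independent of itself, forcing $\alpha$ to be a.s.\ constant, contradicting non-extremality. Hence the independence property fails.

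I expect the main obstacle to be making precise the claim that the directing random element $\alpha$ is recoverable from $\Gamma$ restricted to an arbitrarily small window near the origin (and, by exchangeability, from $\Gamma$ on $[r,r')^2$). This is intuitively clear from the structure of the representation — $\alpha$ governs the functions $f,g,g',l,l'$ and hence the local intensity statistics — but a fully rigorous argument requires citing the appropriate measurability statement about the invariant $\sigma$-field, or adapting the local-recovery argument of \cite[Lemma 3.4]{BCCV17} to the present setting; this is exactly why the paper defers to that reference. Everything else — the disjoint-region independence of the Poisson ingredients and the measurability of each restricted measure with respect to the ingredients in its window — is routine.
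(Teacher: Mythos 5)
Your \emph{only if} direction is fine and is the standard argument: with $\alpha$ deterministic, the two restricted measures are functions of the Poisson ingredients and i.i.d.\ marks lying over disjoint spatial windows, hence independent. (The paper itself omits the proof and defers to \cite[Lemma 3.4]{BCCV17}.) The \emph{if} direction, however, contains a genuine gap, and it sits exactly at the step you flagged as the ``main obstacle.'' The claim that the directing element $\alpha$ is measurable with respect to $\sigma(\Gamma([0,r)^2\cap\cdot))$ for every $r>0$ --- i.e.\ recoverable from an arbitrarily small window near the origin --- is false. The restriction of $\Gamma$ to a bounded window is a finite random object; for a nontrivial mixture of, say, two graphexes $W\equiv p_1\ind{x,y\le 1}$ and $W\equiv p_2\ind{x,y\le 1}$ with $0<p_1<p_2<1$, the laws of $\Gamma|_{[0,r)^2}$ under the two mixture components are mutually absolutely continuous, so no functional of it can identify the component almost surely. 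The law of large numbers that recovers $\alpha$ operates as the window grows to infinity, not as it shrinks: there is no ``rescaling as $\epsilon\downarrow 0$'' limit of density statistics, since $\Gamma([0,\epsilon)^2)$ is eventually zero a.s.

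The argument can be repaired, and the repair is what the cited lemma carries out: $\alpha$ \emph{is} measurable with respect to the tail $\sigma$-field $\bigcap_{r}\sigma(\Gamma([r,\infty)^2\cap\cdot))$, because for any fixed $r$ it is the a.s.\ limit of statistics of $\Gamma|_{[r,s)^2}$ as $s\to\infty$. Fixing $r$ and letting $r'\uparrow\infty$, the hypothesis together with a monotone-class argument upgrades the assumed independence to independence of $\Gamma([0,r)^2\cap\cdot)$ and $\Gamma([r,\infty)^2\cap\cdot)$, whence $\alpha$ is independent of $\sigma(\Gamma([0,r)^2\cap\cdot))$ for every $r$. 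Since these $\sigma$-fields increase to $\sigma(\Gamma)$ and $\alpha$ is $\sigma(\Gamma)$-measurable, $\alpha$ is independent of itself and hence a.s.\ constant, giving extremality. So your overall strategy --- forcing $\alpha$ to be independent of itself --- is the right one; but the window from which you extract $\alpha$ must be the unbounded one $[r,\infty)$, not the bounded one $[0,r)$, and you need the quantifier ``for all $r'$'' to reach it.
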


\bibliographystyle{apa}
\bibliography{BCDS17}

\end{document}